\newtheorem{thm}{Theorem}[section]
\newtheorem{prop}[thm]{Proposition}
\newtheorem{cor}[thm]{Corollary}
\theoremstyle{definition}
\newtheorem*{dff*}{Definition}
\author{M. G. Mahmoudi}
\title[An alternative approach to the concept of separability]{An alternative approach to the concept of separability in
  Galois theory} 
\begin{document}
\maketitle
\begin{abstract}
The notion of a separable extension is an important concept in Galois theory.
Traditionally, this concept is introduced using the minimal polynomial and the formal derivative. 
In this work, we present an alternative approach to this classical concept.  
Based on our approach, we will give new proofs of some basic results about separable extensions (such as the existence of
the separable closure, Theorem of the primitive element and the transitivity of
separability).\\

\noindent 
Key words: separable extension, Galois theory.\\
Mathematics Subject Classification: 12F10, 12F05, 12F15.
\end{abstract}

\section*{Introduction}
\label{sec:intro}

In most elementary textbooks on Galois
theory, separable
extensions are usually introduced
via the minimal polynomial: an algebraic extension $E/K$ is
separable if the minimal polynomial $f(X)\in K[X]$ of every element $\alpha\in E$
has nonzero formal derivative, or equivalently $f(X)$ has distinct
roots in its splitting field or any algebraically closed field $\Omega$ containing $E$.

For normal extensions, there is a more functorial definition:
a field extension $E/K$ is normal if there is a unique $K$-algebra embedding of $E$ into $\Omega$ (see \cite[Ch. 6]{lorenz}).
It would be interesting to know if there exists an analogous definition for separable extensions.
The aim of this work is to address this question.

There exist already some methods to introduce separable extensions using the embeddings
of $E/K$ into an algebraic closure $\Omega$ of $E$ (note that every algebraic
closure of $K$ is isomorphic to an algebraic closure of $E$).
Let $\mathrm{Hom}_{K}(E,\Omega)$ denote the set of all $K$-algebra homomorphisms from
$E$ into $\Omega$. 
On can show that an extension $E/K$ of finite degree is separable if and only if 
$|\mathrm{Hom}_{K}(E,\Omega)|=[E:K]$ (see \cite[Ch. 7]{lorenz}).
There is another criterion (see  \cite[Ch. V., \S15,
n. 6]{bourbaki}) as follows: $E/K$ is separable if and only if for
every linearly independent elements $a_1,\cdots,a_n\in E$ over $K$,
there exist $K$-automorphisms $\sigma_1,\cdots,\sigma_n$ of $\Omega$
such that $\det(\sigma_i(a_j))\neq0$.

These properties are very useful characterization of separable
extensions, but applying them are generally less easy.
We suggest using the following alternative definition of
separability:
\begin{dff*}
We say that an element $\alpha$ of algebraic field extension $E/K$ is \emph{separable} if for
every intermediate subfield $L$ of $E/K$ with $\alpha\in E\backslash L$, there
exist two $L$-algebra homomorphisms $\phi,
\psi:E\rightarrow\Omega$ such that $\phi(\alpha)\neq\psi(\alpha)$. 
An algebraic extension $E/K$ is called separable if all its elements are separable.  
\end{dff*}
Other equivalent conditions will be given in Corollary \ref{L1L2} and Corollary \ref{hom>1}.
Roughly speaking, this definition says that $L$ and $\alpha$ can be
\emph{separated} by the homomorphisms from $E$ into $\Omega$. 
First, we show that the above definition is
equivalent to the usual concept of separability.
Then, we give new proofs of some basic results about separable
extensions (such as the existence of the separable closure, Theorem of
the primitive element and the transitivity of separability) based on this approach.

We hope that our approach provides a useful insight into the concept of separability.

\section{Alternative definition of separability}

We recall the following standard facts about the embeddings of a algebraic extension into an algebraically closed field (for the proof see \cite[p. 53 and p. 67]{lorenz}).

\begin{thm}
{\rm (a)} (Tower formula for the number of embeddings)
Let $E/K$ be a field extension of finite degree with an intermediate
subfield $L$. 
Let $\Omega$ be an algebraic closure of $E$.
Then
$$|\mathrm{Hom}_K(E,\Omega)|=|\mathrm{Hom}_L(E,\Omega)|\times|\mathrm{Hom}_K(L,\Omega)|,$$
in particular $|\mathrm{Hom}_K(E,\Omega)|\leqslant[E:K]$. \\
{\rm (b)} (Extending field embeddings) Let $L/F$ be an algebraic extension of fields.
Then every field embedding from $F$ into an algebraically closed field $\Omega$ can be extended to a one from $L$ to $\Omega$.
\end{thm}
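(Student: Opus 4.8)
The plan is to prove (b) first, since (a) depends on it, and to derive both from one elementary fact about \emph{simple} extensions. Suppose $F(\alpha)/F$ is generated by a single algebraic element with minimal polynomial $f\in F[X]$, and let $\iota\colon F\to\Omega$ be an embedding. Writing $\iota(f)$ for the polynomial obtained by applying $\iota$ to the coefficients of $f$, I claim the extensions of $\iota$ to $F(\alpha)$ correspond bijectively to the roots of $\iota(f)$ in $\Omega$: any extension is determined by the image of $\alpha$, and that image may be any root of $\iota(f)$. Because $\Omega$ is algebraically closed, $\iota(f)$ splits, so it has at least one root and hence $\iota$ has at least one extension; the number of extensions equals the number of \emph{distinct} roots of $\iota(f)$, which is at most $\deg f=[F(\alpha):F]$.

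To prove (b) for an arbitrary algebraic extension $L/F$, I would run a Zorn's lemma argument on the poset of pairs $(M,\sigma)$, where $F\subseteq M\subseteq L$ is an intermediate field and $\sigma\colon M\to\Omega$ extends the given embedding of $F$, ordered by extension of maps. Every chain admits an upper bound, obtained by taking the union of the fields together with the induced embedding, so a maximal element $(M_0,\sigma_0)$ exists. If $M_0\neq L$ I could choose $\alpha\in L\setminus M_0$ and invoke the simple case to extend $\sigma_0$ across $M_0(\alpha)$, contradicting maximality; hence $M_0=L$ and $\sigma_0$ is the desired extension.

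For (a) I would study the restriction map $\rho\colon\mathrm{Hom}_K(E,\Omega)\to\mathrm{Hom}_K(L,\Omega)$ sending $\sigma$ to $\sigma|_L$. Since $E/L$ is finite, hence algebraic, part (b) shows that each $\tau\in\mathrm{Hom}_K(L,\Omega)$ extends to $E$, so $\rho$ is surjective; it then suffices to prove that every fiber $\rho^{-1}(\tau)$ has cardinality $|\mathrm{Hom}_L(E,\Omega)|$. The key device is to promote $\tau$ to an automorphism of $\Omega$: applying (b) to the inclusion $L\subseteq\Omega$ (which is algebraic, as $\Omega$ is algebraic over $K$) extends $\tau$ to an endomorphism $\tilde\tau\colon\Omega\to\Omega$, and $\tilde\tau$ is surjective because its image is an algebraically closed field containing $\tau(L)$ while $\Omega$ is algebraic over $\tau(L)$. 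Now if $\sigma|_L=\tau$, then $\tilde\tau^{-1}\circ\sigma$ fixes $L$ pointwise and so lies in $\mathrm{Hom}_L(E,\Omega)$; the assignment $\sigma\mapsto\tilde\tau^{-1}\circ\sigma$ is a bijection $\rho^{-1}(\tau)\to\mathrm{Hom}_L(E,\Omega)$ with inverse $\psi\mapsto\tilde\tau\circ\psi$. All fibers thus share the common cardinality $|\mathrm{Hom}_L(E,\Omega)|$, and since the base set is finite the tower formula follows. The bound $|\mathrm{Hom}_K(E,\Omega)|\leqslant[E:K]$ then drops out by applying the formula along a tower of simple extensions $K=F_0\subset\cdots\subset F_n=E$ and using the simple-case count at each step together with multiplicativity of degrees.

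I expect the fiber count in (a) to be the main obstacle, precisely because $\mathrm{Hom}_L(E,\Omega)$ records extensions of the \emph{canonical} inclusion $L\hookrightarrow\Omega$, whereas a fiber of $\rho$ records extensions of an arbitrary embedding $\tau$; reconciling the two is exactly what the automorphism $\tilde\tau$ accomplishes, and verifying that $\tilde\tau$ is genuinely surjective (not merely injective) is the point that most deserves care.
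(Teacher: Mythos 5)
The paper offers no proof of this theorem: it is stated as a recalled standard fact with a pointer to Lorenz (pp.~53 and 67), so there is no in-paper argument to compare yours against. Your proposal is correct and complete, and it is essentially the standard textbook proof the paper is citing: the root-counting description of extensions across a simple extension, the Zorn's lemma argument for (b), and the fiber analysis of the restriction map $\rho$ via an extension of $\tau$ to an automorphism $\tilde\tau$ of $\Omega$ --- whose surjectivity you rightly flag as the delicate point, and which you justify correctly ($\tilde\tau(\Omega)$ is an algebraically closed subfield of $\Omega$ containing $K$, and $\Omega$ is algebraic over it, so they coincide). The only cosmetic remark: the finiteness of $\mathrm{Hom}_K(E,\Omega)$ that you invoke when summing over fibers is itself immediate from the simple-extension count applied to a finite generating tower, so there is no circularity with the bound $|\mathrm{Hom}_K(E,\Omega)|\leqslant[E:K]$ that you derive afterwards.
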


\begin{prop}
Let $E/K$ be an extension of fields and $\alpha\in E$.
Let $\Omega$ be the algebraic closure of $E$.
The following statements are equivalent:\\
$(i)$ The minimal polynomial of $\alpha$ over $K$ has distinct roots in $\Omega$.\\
$(ii)$ For
every intermediate subfield $L$ of $E/K$ with $\alpha\in E\backslash L$, there
exist $L$-algebra homomorphisms $\phi,
\psi:E\rightarrow\Omega$ such that $\phi(\alpha)\neq\psi(\alpha)$.\\
$(iii)$ For every intermediate subfield $L$ of $K(\alpha)/K$ with
$\alpha\in K(\alpha)\backslash L$, there
exist two $L$-algebra homomorphisms $\phi,
\psi:E\rightarrow\Omega$ such that $\phi(\alpha)\neq\psi(\alpha)$.
\end{prop}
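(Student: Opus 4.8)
The plan is to prove the equivalence by establishing the implications $(i) \Rightarrow (ii) \Rightarrow (iii) \Rightarrow (i)$. The implication $(ii) \Rightarrow (iii)$ is immediate, since the condition in $(iii)$ is simply the restriction of the condition in $(ii)$ to those intermediate fields $L$ that are contained in $K(\alpha)$; every such $L$ is an intermediate field of $E/K$, so $(iii)$ is a special case of $(ii)$. Thus the substance of the argument lies in the other two implications, and the key tool throughout is the tower formula and the extension theorem recorded in the preceding Theorem.

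For $(i) \Rightarrow (ii)$, I would fix an intermediate field $L$ of $E/K$ with $\alpha \in E \setminus L$ and consider the minimal polynomial $g(X)$ of $\alpha$ over $L$. The hypothesis $(i)$ says the minimal polynomial $f(X)$ of $\alpha$ over $K$ has distinct roots in $\Omega$; since $g(X)$ divides $f(X)$ in $L[X]$, the polynomial $g(X)$ also has distinct roots in $\Omega$. Because $\alpha \notin L$, the degree of $g$ is at least $2$, so $g$ has (at least) two distinct roots $\beta_1 \neq \beta_2$ in $\Omega$. Each root $\beta_i$ determines an $L$-algebra embedding $L(\alpha) \to \Omega$ sending $\alpha \mapsto \beta_i$, and by part (b) of the Theorem each of these extends to an $L$-algebra homomorphism $\phi, \psi : E \to \Omega$. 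These satisfy $\phi(\alpha) = \beta_1 \neq \beta_2 = \psi(\alpha)$, which is exactly what $(ii)$ requires.

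For the remaining implication $(iii) \Rightarrow (i)$, I would argue by contraposition: assuming $f(X)$ has a repeated root in $\Omega$, I must produce an intermediate field $L$ of $K(\alpha)/K$ with $\alpha \notin L$ for which \emph{every} pair of $L$-algebra homomorphisms $E \to \Omega$ agrees on $\alpha$. The natural candidate is to take $L$ maximal among intermediate fields of $K(\alpha)/K$ not containing $\alpha$, or more concretely to exploit the fact that inseparability forces the number of distinct images of $\alpha$ to be strictly smaller than $[K(\alpha):K]$. The cleanest route is to count: the distinct values $\phi(\alpha)$ as $\phi$ ranges over $\mathrm{Hom}_L(K(\alpha),\Omega)$ are precisely the distinct roots of the minimal polynomial of $\alpha$ over $L$, and since any homomorphism $E \to \Omega$ restricts to one on $K(\alpha)$, controlling these roots controls $\phi(\alpha)$ for all of $E$. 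I expect the main obstacle to be here: one must choose $L$ so that the minimal polynomial of $\alpha$ over $L$ is \emph{irreducible with a repeated root} yet still of degree $\geq 2$, i.e. a genuinely inseparable polynomial, and then argue that all its roots in $\Omega$ coincide. In characteristic $p$ this is achieved by pushing $\alpha$ to a purely inseparable situation: if $f$ has a repeated root, then $\alpha$ fails to be separable in the classical sense, and one can locate an intermediate $L$ over which $\alpha$ satisfies an irreducible polynomial of the form $X^{p} - c$, whose unique root in $\Omega$ is $\alpha$ itself, forcing $\phi(\alpha) = \psi(\alpha) = \alpha$ for all $L$-homomorphisms and thereby negating $(iii)$.
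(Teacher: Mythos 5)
Your proposal follows essentially the same route as the paper: $(i)\Rightarrow(ii)$ by observing that the minimal polynomial of $\alpha$ over $L$ divides $f$ and hence has at least two distinct roots once $\alpha\notin L$, $(ii)\Rightarrow(iii)$ trivially, and $(iii)\Rightarrow(i)$ by contraposition via an intermediate field over which $\alpha$ is purely inseparable. The one point you leave as an expectation rather than a proof is the concrete construction of that field: the paper writes $f(X)=g(X^{p^n})$ with $g$ having nonzero derivative, takes $L=K(\alpha^{p^n})$, checks $\alpha\notin L$ because otherwise $\alpha$ would lie in a field of degree at most $\deg g<\deg f$ over $K$, and concludes that every $L$-homomorphism $\phi$ satisfies $\phi(\alpha)^{p^n}=\alpha^{p^n}$, so injectivity of the Frobenius on $\Omega$ forces all such $\phi$ to agree on $\alpha$ --- exactly the mechanism you predicted.
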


\begin{proof}
  $(i)\Rightarrow(ii)$
 The minimal polynomial of $\alpha$ over $L$ has
  distinct roots in $\Omega$, so $L(\alpha)$ has distinct
  $L$-embeddings into $\Omega$.
  $(ii)\Rightarrow(iii)$ is immediate.
  $(iii)\Rightarrow(i)$
Let $f(X)\in K[X]$ be the minimal polynomial of $\alpha$.
If the minimal polynomial of $\alpha$ over $K$ does not have distinct roots in $\Omega$, then $K$ is of characteristic $p>0$ and there exists
$n\geqslant 1$ such that
$f(X)=g(X^{p^n})$ where $g(X)\in K[X]$ has nonzero formal derivative.
Take $L=K(\alpha^{p^{n}})$. 
Note that $\alpha\in K(\alpha)\backslash L$, otherwise $\alpha$ would satisfy in a polynomial of degree less than $\deg f(x)$.
Since $\alpha^{p^n}\in L$, we have
$\phi(\alpha)=\psi(\alpha)$ for every
$L$-algebra homomorphism $\phi, \psi:E\rightarrow\Omega$, contradiction.
\end{proof}

  The above equivalence, in particular shows that the definition of
  separability given in (ii), which at first seems \emph{relative},
  does not depend on $E$. 
Thus, if $E$ and $E'$ are two
  algebraic field extension of $K$ containing $\alpha$, then $\alpha$
 is separable as an element of $E$ then it is separable as an element of $E'$.
Also, if $K\subset L\subset E$ and $\alpha\in E$ is separable over $K$,
then it is separable over $L$.

\begin{cor}\label{L1L2}
Let $E/K$ be a separable extension with intermediate subfields $L_1$
and $L_2$.
The following conditions are equivalents:\\
$(i)$ $L_1\subset L_2$.\\
$(ii)$ For every $\phi, \psi\in\mathrm{Hom}_K(E,\Omega)$,
$\phi|_{L_2}=\psi|_{L_2}$ implies $\phi|_{L_1}=\psi|_{L_1}$.\\
Conversely if $(i)$ and $(ii)$ are equivalents for every $L_1$ and
$L_2$ then $E/K$ is separable.
\end{cor}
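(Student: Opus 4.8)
The statement splits into two implications, and the plan is to treat them separately. The first observation I would make is that $(i)\Rightarrow(ii)$ is automatic and uses no hypothesis whatsoever: if $L_1\subseteq L_2$ and two homomorphisms agree on $L_2$, then they agree in particular on the subfield $L_1$. Hence all the content lies in the reverse implication $(ii)\Rightarrow(i)$, which is exactly where separability enters in the forward direction, and whose universal validity is exactly what must be leveraged in the converse.

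For the forward direction (assuming $E/K$ separable) I would prove $(ii)\Rightarrow(i)$ by contraposition. Suppose $L_1\not\subseteq L_2$ and fix $\alpha\in L_1\setminus L_2$. Since $E/K$ is separable, $\alpha$ is a separable element; as $L_2$ is an intermediate subfield with $\alpha\in E\setminus L_2$, the definition (equivalently condition $(ii)$ of the Proposition applied with $L=L_2$) furnishes two $L_2$-homomorphisms $\phi,\psi:E\to\Omega$ with $\phi(\alpha)\ne\psi(\alpha)$. These lie in $\mathrm{Hom}_K(E,\Omega)$ because $K\subseteq L_2$, and they agree on $L_2$ since they are $L_2$-homomorphisms, yet they disagree at $\alpha\in L_1$. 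This is precisely a violation of $(ii)$, completing the contrapositive.

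For the converse I would note that, since $(i)\Rightarrow(ii)$ always holds, the hypothesis that $(i)$ and $(ii)$ are equivalent for every $L_1,L_2$ reduces to ``$(ii)\Rightarrow(i)$ for every $L_1,L_2$.'' To prove $E/K$ separable, take an arbitrary $\alpha\in E$ and assume for contradiction that it is not separable: there is an intermediate field $L$ with $\alpha\in E\setminus L$ such that every pair of $L$-homomorphisms $E\to\Omega$ agrees at $\alpha$. Put $L_2=L$ and $L_1=L(\alpha)$; then $(i)$ fails because $\alpha\in L_1\setminus L_2$, so by hypothesis $(ii)$ must fail as well, which I shall contradict by showing $(ii)$ holds. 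Given $\phi,\psi\in\mathrm{Hom}_K(E,\Omega)$ with $\phi|_L=\psi|_L=:\tau$, the restriction $\tau$ is a $K$-embedding of $L$ into $\Omega$; since $\tau$ fixes $K$ and $\Omega$ is an algebraic closure that is algebraic over $\tau(L)\supseteq K$, Theorem (b) lets me extend $\tau$ to an automorphism $\tilde\tau$ of $\Omega$. Then $\tilde\tau^{-1}\phi$ and $\tilde\tau^{-1}\psi$ are $L$-homomorphisms, so by non-separability they agree at $\alpha$; applying $\tilde\tau$ gives $\phi(\alpha)=\psi(\alpha)$, whence $\phi|_{L_1}=\psi|_{L_1}$ and $(ii)$ holds, the desired contradiction.

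The main obstacle is this last verification of $(ii)$: non-separability only controls $L$-homomorphisms (those fixing $L$ pointwise), whereas $(ii)$ quantifies over all $K$-homomorphisms whose common restriction $\tau$ to $L$ may be a nontrivial embedding, and bridging this gap is what forces the extension of $\tau$ to an automorphism of $\Omega$. I expect a cleaner, computation-free alternative by invoking the Proposition directly: non-separability of $\alpha$ means (by the equivalence there) that $\mathrm{char}\,K=p>0$ and $\alpha^{p^{n}}\in L$ for the choice $L=K(\alpha^{p^{n}})$ with $\alpha\notin L$; then $\phi|_L=\psi|_L$ immediately yields $\phi(\alpha)^{p^{n}}=\psi(\alpha)^{p^{n}}$, and injectivity of the Frobenius on $\Omega$ gives $\phi(\alpha)=\psi(\alpha)$, establishing $(ii)$ without the automorphism-extension argument.
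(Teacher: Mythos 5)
Your proof is correct, and its skeleton coincides with the paper's: $(i)\Rightarrow(ii)$ is noted to be trivial, $(ii)\Rightarrow(i)$ is proved by picking $\alpha\in L_1\setminus L_2$ and applying separability of $\alpha$ with $L=L_2$, and the converse proceeds by contradiction, taking $L_2=L$ and for $L_1$ a field generated by the offending element $\alpha$ (the paper uses $K(\alpha)$, you use $L(\alpha)$; either works). Where you genuinely add something is in the converse: the paper simply asserts that if no two $L$-homomorphisms separate $\alpha$ then ``the equivalence of $(i)$ and $(ii)$ implies $K(\alpha)\subset L$,'' leaving unverified the hypothesis $(ii)$, which quantifies over \emph{all} pairs $\phi,\psi\in\mathrm{Hom}_K(E,\Omega)$ whose restrictions to $L$ agree as a possibly nontrivial embedding $\tau$ --- not merely over pairs fixing $L$ pointwise. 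You correctly isolate this as the crux and close the gap in two independent ways: by extending $\tau$ to an automorphism of $\Omega$ and conjugating $\phi,\psi$ back into $\mathrm{Hom}_L(E,\Omega)$, or by invoking the Proposition's explicit description of inseparability ($\operatorname{char}K=p$, $\alpha^{p^n}\in L$, $\alpha\notin L$) together with injectivity of the $p^n$-th power map on $\Omega$. Both are valid; the second is the more economical, since the first tacitly uses that the extended embedding of $\Omega$ is surjective, which itself requires a short standard argument. In short, your proof is a completion of the paper's argument rather than a departure from it, and the step you supply is one the paper's own proof actually needs.
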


\begin{proof}
$(ii)\Rightarrow(i)$: If $L_1\not\subset L_2$, consider an element
$\alpha\in L_1\backslash L_2$, then there exists $\phi,
\psi\in\mathrm{Hom}_{L_2}(E,\Omega)$ with
$\phi(\alpha)\neq\psi(\alpha)$.
This contradicts $(ii)$.
The implication $(i)\Rightarrow(ii)$ is evident.\\
Conversely suppose that $L$ is an intermediate subfield and $\alpha\in
E\backslash L$.
If there exist no $\phi, \psi\in\mathrm{Hom}_{L}(E,\Omega)$ with
$\phi(\alpha)\neq\psi(\alpha)$, then by taking $L_1=K(\alpha)$ and
$L_2=L$, the equivalence of $(i)$ and $(ii)$ implies that
$K(\alpha)\subset L$, contradiction.
\end{proof}

\begin{cor}
  Let $E/K$ be a separable extension and let $\alpha, \beta\in E$.
The following conditions are equivalent:\\
$(i)$ $\alpha\in K(\beta)$.\\
$(ii)$ For every $\phi, \psi\in\mathrm{Hom}_K(E,\Omega)$,
$\phi(\beta)=\psi(\beta)$ implies $\phi(\alpha)=\psi(\alpha)$.
\end{cor}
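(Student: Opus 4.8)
The plan is to deduce this corollary directly from Corollary \ref{L1L2} by specializing the intermediate subfields to $L_1 = K(\alpha)$ and $L_2 = K(\beta)$. The bridge between the two statements is the elementary observation that a $K$-algebra homomorphism out of $E$ is determined on the subfield generated by a single element by its value on that element, so that ``agreeing at a point'' and ``agreeing on the generated subfield'' say the same thing.

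First I would record the following translation: for any $\gamma \in E$ and any $\phi, \psi \in \mathrm{Hom}_K(E,\Omega)$, one has $\phi(\gamma) = \psi(\gamma)$ if and only if $\phi|_{K(\gamma)} = \psi|_{K(\gamma)}$. One direction is immediate. For the other, since $E/K$ is algebraic, $\gamma$ is algebraic over $K$, whence $K(\gamma) = K[\gamma]$ and every element of $K(\gamma)$ has the form $\sum_i c_i \gamma^i$ with $c_i \in K$; as $\phi$ and $\psi$ both fix $K$ and respect the ring operations, the equality $\phi(\gamma) = \psi(\gamma)$ forces them to agree on all such elements.

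Applying this with $\gamma = \beta$ and with $\gamma = \alpha$, condition $(ii)$ of the present statement is rephrased as: for all $\phi, \psi \in \mathrm{Hom}_K(E,\Omega)$, $\phi|_{K(\beta)} = \psi|_{K(\beta)}$ implies $\phi|_{K(\alpha)} = \psi|_{K(\alpha)}$. This is precisely condition $(ii)$ of Corollary \ref{L1L2} for the choice $L_1 = K(\alpha)$, $L_2 = K(\beta)$. On the other hand, condition $(i)$ here, namely $\alpha \in K(\beta)$, is equivalent to the inclusion $K(\alpha) \subset K(\beta)$, which is exactly condition $(i)$ of Corollary \ref{L1L2} for the same subfields. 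Since $E/K$ is separable, Corollary \ref{L1L2} yields the equivalence of these two conditions, which completes the proof.

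I expect no serious obstacle, since essentially all the content is carried by the previous corollary and the only new ingredient is the routine translation above. The single point deserving a word of care is the identity $K(\gamma) = K[\gamma]$, which guarantees that elements of $K(\gamma)$ are \emph{polynomials} rather than merely rational functions in $\gamma$; this is precisely where the algebraicity of $E/K$, implicit in its separability, enters.
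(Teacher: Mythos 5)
Your proof is correct and matches the paper's intent exactly: the paper states this corollary without proof immediately after Corollary \ref{L1L2}, clearly meaning the specialization $L_1=K(\alpha)$, $L_2=K(\beta)$ together with the observation that agreement at $\gamma$ is the same as agreement on $K(\gamma)=K[\gamma]$. Nothing is missing.
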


As a consequence we obtain
\begin{cor}\label{hom>1}
  An algebraic field extension $E/K$ is separable if and only if for
  every proper intermediate subfield $L$ of $E/K$, $|\mathrm{Hom}_L(E,\Omega)|>1$.
\end{cor}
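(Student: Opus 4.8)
The statement is an equivalence, and I would treat the two implications separately; the forward implication is a direct unwinding of the definition, while the converse carries all the weight.

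For the forward implication, suppose $E/K$ is separable and let $L$ be a proper intermediate subfield, so there is some $\alpha\in E\setminus L$. Since $E/K$ is separable, $\alpha$ is a separable element, and applying the defining property of a separable element to the intermediate field $L$ (which satisfies $\alpha\in E\setminus L$) produces two $L$-algebra homomorphisms $\phi,\psi\colon E\to\Omega$ with $\phi(\alpha)\neq\psi(\alpha)$. In particular $\phi\neq\psi$, so $|\mathrm{Hom}_L(E,\Omega)|\geqslant 2>1$. This direction needs nothing beyond the definition.

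For the converse I would argue by contraposition: assuming $E/K$ is not separable, I would exhibit a single proper intermediate subfield $M$ with $|\mathrm{Hom}_M(E,\Omega)|=1$. The natural candidate is the set $S$ of all elements of $E$ that are separable over $K$. Granting that $S$ is an intermediate field, I claim $E/S$ is purely inseparable. Indeed we are in characteristic $p>0$, and by the minimal-polynomial description used in the Proposition above, for each $x\in E$ some power $x^{p^{m}}$ is a root of the separable factor of its minimal polynomial, hence is separable over $K$ and so lies in $S$; then any $S$-algebra homomorphism $\phi\colon E\to\Omega$ satisfies $\phi(x)^{p^{m}}=\phi(x^{p^{m}})=x^{p^{m}}$, and injectivity of the $p^{m}$-power map on $\Omega$ forces $\phi(x)=x$. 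Thus every $S$-embedding is the inclusion and $|\mathrm{Hom}_S(E,\Omega)|=1$. Since $E/K$ is not separable there is a non-separable element, which by construction lies outside $S$, so $S\subsetneq E$ is a proper intermediate subfield; this contradicts the hypothesis that every proper intermediate subfield admits more than one embedding, completing the contrapositive.

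The main obstacle is the step I glossed over: that the separable elements form a field $S$, equivalently that $E$ is purely inseparable over its maximal separable subextension. Within the logic of the paper this must be established without invoking the existence of the separable closure or the transitivity of separability, since those are proved later by exactly this method. I would therefore aim to derive the field structure of $S$ from the tools already on hand, namely the tower formula for $|\mathrm{Hom}_K(\,\cdot\,,\Omega)|$ and the remark after the Proposition that separability over $K$ descends to separability over any intermediate field, so that for separable $a,b$ every element of $K(a,b)$ is again separable. An alternative route I would keep in reserve is through the converse part of Corollary \ref{L1L2}: there it suffices to verify that $(ii)$ implies $(i)$, and one checks that $(ii)$ for $L_1,L_2$ says precisely that $L_1$ lies in the purely inseparable closure of $L_2$ in $E$, whereupon the present hypothesis is what forces that closure to collapse onto $L_2$. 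Either way the crux is the same separable/purely-inseparable dichotomy, and isolating it cleanly without circularity is the part demanding the most care.
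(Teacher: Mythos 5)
Your forward implication coincides with the paper's: choose $\alpha\in E\setminus L$, apply the definition of a separable element to the intermediate field $L$, and conclude $|\mathrm{Hom}_L(E,\Omega)|\geqslant 2$. That part is complete.

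The converse is where the genuine gap lies, and it is the one you flag yourself: your entire contrapositive hinges on the claim that the set $S$ of separable elements is an intermediate subfield (equivalently, that $E$ is purely inseparable over its maximal separable subextension), and you do not prove it. This is not a minor omission to be "granted": in the paper that statement is Proposition \ref{sum_sep}, whose proof invokes Corollary \ref{K(alpha)}, whose proof in turn invokes Corollary \ref{hom>1} --- the very statement at issue. So as written your argument imports a later result in a way that is circular within the paper's development, and neither of the two repair sketches you keep "in reserve" is carried out; the second one (reading condition $(ii)$ of Corollary \ref{L1L2} as membership in the purely inseparable closure) again presupposes the separable/purely-inseparable dichotomy you would need to establish first. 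The remainder of your argument is fine once $S$ is granted: $x^{p^m}$ is a root of the irreducible polynomial $g$ with $f(X)=g(X^{p^m})$ and $g'\neq 0$, hence lies in $S$, so every $S$-embedding of $E$ is the inclusion, $|\mathrm{Hom}_S(E,\Omega)|=1$, and $S$ is proper. But the paper does not take this route at all: its converse is a direct reduction to Corollary \ref{L1L2} --- if every pair $\phi,\psi\in\mathrm{Hom}_L(E,\Omega)$ agreed on some $\alpha\in E\setminus L$, then condition $(ii)$ of that corollary would hold for $L_1=K(\alpha)$ and $L_2=L$, forcing $K(\alpha)\subset L$, a contradiction --- with no appeal to the separable closure. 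To make your proposal work you would either have to supply an independent, non-circular proof that $S$ is a field, or abandon the separable-closure route in favor of the reduction to Corollary \ref{L1L2}.
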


\begin{proof}
Consider an element $\alpha\in E\backslash L$.
As $E/K$ is separable, there exists $\phi,
\psi\in\mathrm{Hom}_L(E,\Omega)$ such that
$\phi(\alpha)\neq\psi(\alpha)$, hence $|\mathrm{Hom}_L(E,\Omega)|>1$.
Conversely, suppose that $|\mathrm{Hom}_L(E,\Omega)|>1$ for every
intermediate subfield $L$. 
Consider an element $\alpha\in E\backslash L$. 
We show that there exist $\phi, \psi\in\mathrm{Hom}_L(E,\Omega)$ such
that $\phi(\alpha)\neq\psi(\alpha)$. 
If it is not the case, by taking $L_2=L$ and $L_1=K(\alpha)$, Corollary \ref{L1L2} implies
that $K(\alpha)\subset L$, contradiction.
\end{proof}

\section{Some applications}
\label{sec:some-applications}

\begin{thm}[Primitive Element Theorem]
\label{primitive-element}
Let $E/K$ be a separable field extension of finite degree.
Then there
exists an element $\alpha\in E$ such that $E=K(\alpha)$.
\end{thm}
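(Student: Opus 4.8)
The plan is to reduce the statement ``$E = K(\alpha)$'' to a separation property of the finitely many $K$-embeddings of $E$ into $\Omega$, using Corollary \ref{L1L2}, and then to exhibit one element $\alpha$ realizing that property. First I would record the reformulation. Since $[E:K]$ is finite, the tower formula gives $|\mathrm{Hom}_K(E,\Omega)|\leqslant[E:K]<\infty$, so $\mathrm{Hom}_K(E,\Omega)$ is a finite set. Applying Corollary \ref{L1L2} with $L_1=E$ and $L_2=K(\alpha)$, and noting that $\phi|_{K(\alpha)}=\psi|_{K(\alpha)}$ is equivalent to $\phi(\alpha)=\psi(\alpha)$ while $\phi|_{E}=\psi|_{E}$ just means $\phi=\psi$, one obtains
\[
E = K(\alpha) \quad\Longleftrightarrow\quad \bigl(\,\phi(\alpha)=\psi(\alpha)\ \Rightarrow\ \phi=\psi \ \text{ for all } \phi,\psi\in\mathrm{Hom}_K(E,\Omega)\,\bigr).
\]
In other words, $E=K(\alpha)$ exactly when the evaluation map $\phi\mapsto\phi(\alpha)$ is injective on $\mathrm{Hom}_K(E,\Omega)$. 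Thus the entire theorem reduces to finding a single $\alpha\in E$ that separates all the embeddings.

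Next I would split into two cases. If $K$ is finite, then $E$ is a finite field, its multiplicative group $E^{\times}$ is cyclic, and any generator $\alpha$ of $E^{\times}$ already satisfies $E=K(\alpha)$, so this case is immediate. Assume now that $K$ is infinite and write $\mathrm{Hom}_K(E,\Omega)=\{\sigma_1,\dots,\sigma_n\}$. For each pair $i\neq j$ the difference $\sigma_i-\sigma_j$ is a nonzero $K$-linear map $E\to\Omega$, so its kernel $W_{ij}=\{x\in E:\sigma_i(x)=\sigma_j(x)\}$ is a proper $K$-subspace of $E$. Since there are only finitely many such pairs and a vector space over an infinite field is never a finite union of proper subspaces, there exists $\alpha\in E\setminus\bigcup_{i\neq j}W_{ij}$. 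For this $\alpha$ the values $\sigma_1(\alpha),\dots,\sigma_n(\alpha)$ are pairwise distinct, so evaluation at $\alpha$ is injective, and the reformulation above yields $E=K(\alpha)$.

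The main obstacle is the infinite-field case, and specifically the construction of one element $\alpha$ that simultaneously separates every pair of embeddings; the $K$-linearity of the embeddings is what reduces this to avoiding a finite family of proper subspaces, and this is precisely where the hypothesis that $K$ is infinite enters. An equivalent route, should one prefer to avoid the union-of-subspaces lemma, is to choose $\theta\in E$ with $[K(\theta):K]$ maximal and prove $E=K(\theta)$ by reducing the finitely generated case to a two-generator situation $K(\beta,\gamma)$ and taking $\alpha=\beta+c\gamma$ for a scalar $c\in K$ lying outside the finitely many \emph{bad} values forced by the equations $\sigma_i(\beta+c\gamma)=\sigma_j(\beta+c\gamma)$. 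In either approach separability is used only through the finiteness of $\mathrm{Hom}_K(E,\Omega)$ and the characterization supplied by Corollary \ref{L1L2}.
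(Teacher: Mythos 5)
Your proof is correct, but in the infinite-field case it takes a genuinely different route from the paper's. The paper first reduces to showing that any two elements $\alpha,\beta$ lie in a common $K(\gamma)$, then argues by contradiction inside the plane $P$ spanned by $\alpha$ and $\beta$: it picks infinitely many pairwise noncolinear $\gamma_i\in P$, applies Corollary \ref{L1L2} to the non-inclusions $K(\gamma_i)\not\subset K(\gamma_j)$ to produce embeddings $\phi_{ij},\psi_{ij}$, and then uses a pigeonhole argument on the finite set $\mathrm{Hom}_K(E,\Omega)$ to reach a contradiction. You instead apply Corollary \ref{L1L2} once, globally, with $L_1=E$ and $L_2=K(\alpha)$, to reformulate $E=K(\alpha)$ as injectivity of evaluation at $\alpha$ on $\mathrm{Hom}_K(E,\Omega)=\{\sigma_1,\dots,\sigma_n\}$, and then produce $\alpha$ by avoiding the finitely many proper $K$-subspaces $\ker(\sigma_i-\sigma_j)$; this is valid since each $\sigma_i-\sigma_j$ is a nonzero $K$-linear map. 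Your argument is shorter and dispenses with the two-generator reduction, but it imports the standard lemma that a vector space over an infinite field is not a finite union of proper subspaces, which the paper does not invoke (its pigeonhole argument in the plane $P$ is, in effect, a self-contained substitute for that lemma restricted to the situation at hand). Your finite-field case also differs: you invoke cyclicity of $E^{\times}$, whereas the paper counts roots of $x^{q^{n-1}}-x$ to avoid citing that theorem; both are fine. Everything you use --- finiteness of $\mathrm{Hom}_K(E,\Omega)$ via the tower formula and Corollary \ref{L1L2} with $L_1=E$ --- is available at this point in the paper, so your argument is complete.
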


\begin{proof}
For finite fields, we follow the standard argument.
If $K$ is a field with $q$ elements and $|E|=q^n$, then we have
$|L|\leqslant q^{n-1}$ for every proper intermediate subfield $L$ of
$E/K$, hence every element $x\in E$ satisfies $x^{q^{n-1}}-x=0$.
Since $|E|>q^{n-1}$,
there exists certainly an element $\alpha\in E$ with
$\alpha^{q^{n-1}}-\alpha\neq0$, this element is not included in any proper subfield, hence $K(\alpha)=E$.

Now consider the case where $K$ is infinite.
Let $\alpha, \beta\in E$. 
It suffices to prove that there exists $\gamma\in E$ such that
$\alpha, \beta\in K(\gamma)$.
We may assume that $\alpha$ and $\beta$ are linearly independent over
$K$.
Let $P$ be the plane spanned by $\alpha$ and $\beta$.
If our claim is not true then for every nonzero $\gamma\in P$,
$K(\gamma)\cap P$ is one dimensional over $K$.
Since $K$ is infinite, there exists an infinite number of pairwise noncolinear elements $\gamma_1, \gamma_2,
\cdots\in P$.
It follows that $\gamma_i\not\in K(\gamma_j)$ if $i\neq j$.
By Corollary \ref{L1L2}, 
there exist 
$\phi_{ij}, \psi_{ij}\in
\mathrm{Hom}_K(E,\Omega)$ such that $\phi_{ij}|_{K(\gamma_i)}=\psi_{ij}|_{K(\gamma_i)}$ but $\phi_{ij}|_{K(\gamma_j)}\neq\psi_{ij}|_{K(\gamma_j)}$.
As $\mathrm{Hom}_K(E,\Omega)$ is finite, there exist distinct $i, j,
k$ such that $\phi_{ji}=\phi_{ki}$ and $\psi_{ji}=\psi_{ki}$.
We obtain 
$$\phi_{ki}(\gamma_j)=\phi_{ji}(\gamma_j)=\psi_{ji}(\gamma_j)=\psi_{ki}(\gamma_j),$$
on the other hand 
  $\phi_{ki}(\gamma_k)=\psi_{ki}(\gamma_k),$
so $\phi_{ki}|_P=\psi_{ki}|_P$, contradiction.
\end{proof}

\begin{cor}\label{K(alpha)}
  If $E/K$ is a separable extension of finite degree then
  $|\mathrm{Hom}_K(E,\Omega)|=[E:K]$.
Conversely if $|\mathrm{Hom}_K(E,\Omega)|=[E:K]$ then $E/K$ is
separable. 
In particular if $\alpha\in E$ is separable over $K$ then the
extension $K(\alpha)/K$ is separable.
\end{cor}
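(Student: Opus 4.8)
The plan is to prove the three assertions in turn: reduce the forward implication to the Primitive Element Theorem, reduce the converse to the tower formula together with Corollary \ref{hom>1}, and then obtain the final ``in particular'' statement by applying the converse to the subextension $K(\alpha)/K$.

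For the forward direction, suppose $E/K$ is separable of finite degree. By the Primitive Element Theorem (Theorem \ref{primitive-element}) we may write $E=K(\alpha)$ for a single $\alpha$, which is separable since every element of $E$ is. A $K$-algebra homomorphism $K(\alpha)\to\Omega$ is determined by the image of $\alpha$, and that image must be a root of the minimal polynomial $f\in K[X]$ of $\alpha$; conversely each root of $f$ in $\Omega$ determines such a homomorphism. Thus $|\mathrm{Hom}_K(E,\Omega)|$ equals the number of distinct roots of $f$ in $\Omega$. By the equivalence $(i)\Leftrightarrow(ii)$ of the Proposition, separability of $\alpha$ means precisely that $f$ has distinct roots, so this number is $\deg f=[K(\alpha):K]=[E:K]$.

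For the converse, assume $|\mathrm{Hom}_K(E,\Omega)|=[E:K]$ and let $L$ be any proper intermediate subfield. Applying the tower formula together with the bounds $|\mathrm{Hom}_L(E,\Omega)|\leqslant[E:L]$ and $|\mathrm{Hom}_K(L,\Omega)|\leqslant[L:K]$ gives
$$[E:K]=|\mathrm{Hom}_L(E,\Omega)|\cdot|\mathrm{Hom}_K(L,\Omega)|\leqslant[E:L]\cdot[L:K]=[E:K].$$
Hence both inequalities are in fact equalities; in particular $|\mathrm{Hom}_L(E,\Omega)|=[E:L]$. Since $L$ is proper, $[E:L]>1$, so $|\mathrm{Hom}_L(E,\Omega)|>1$ for every proper intermediate subfield $L$, and Corollary \ref{hom>1} yields that $E/K$ is separable.

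For the final assertion, suppose $\alpha\in E$ is separable over $K$ and apply the above to the finite extension $K(\alpha)/K$. Letting $f$ denote the minimal polynomial of $\alpha$, the same counting as in the forward direction shows that $|\mathrm{Hom}_K(K(\alpha),\Omega)|$ equals the number of distinct roots of $f$, which by separability of $\alpha$ is $\deg f=[K(\alpha):K]$. The converse direction, already established, then shows that $K(\alpha)/K$ is separable. The step requiring the most care is the identification of the number of $K$-embeddings of $K(\alpha)$ with the number of distinct roots of the minimal polynomial, together with the check that the argument is not circular: it uses only the Proposition's characterization of a separable \emph{element} by the distinctness of roots, not any prior knowledge about separable \emph{extensions}.
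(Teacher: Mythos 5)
Your proof is correct and follows essentially the same route as the paper: the forward direction via the Primitive Element Theorem and counting the roots of the minimal polynomial of a primitive element, and the converse via the tower formula combined with Corollary \ref{hom>1} (you argue directly that $|\mathrm{Hom}_L(E,\Omega)|=[E:L]>1$ for every proper intermediate $L$, where the paper reaches the same conclusion by contradiction, but the ingredients are identical). Your explicit treatment of the final ``in particular'' clause, which the paper leaves implicit, is a minor but welcome addition.
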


\begin{proof}
Suppose that $[E:K]=n$.
  By Theorem \ref{primitive-element}, there exists $\alpha\in E$ with
  $E=K(\alpha)$.
As $\alpha$ is separable over $F$, the minimal polynomial $f(x)\in
K[x]$ of $\alpha$ has $n$ distinct roots
$\alpha_1,\cdots,\alpha_n\in\Omega$.
For every $i=1,\cdots,n$ we can define a map
$\phi_i\in\mathrm{Hom}(E,\Omega)$ by $\phi_i(\alpha)=\alpha_i$.
Conversely for every $\phi\in\mathrm{Hom}_K(E,\Omega)$, $\phi(\alpha)$
is necessarily equal to one of the elements
$\alpha_1,\cdots,\alpha_n$.

Conversely, suppose that $|\mathrm{Hom}_K(E,\Omega)|=[E:K]$. 
We have to show that $E/K$ is separable. 
Otherwise, by Corollary \ref{hom>1}, there exists a proper intermediate subfield $L$ such that
$|\mathrm{Hom}_L(E,\Omega)|=1$.
Thus we have
$|\mathrm{Hom}_K(E,\Omega)|=|\mathrm{Hom}_K(L,\Omega)|\times|\mathrm{Hom}_L(E,\Omega)|=|\mathrm{Hom}_K(L,\Omega)|\leqslant[L:K]<n$,
contradiction.

\end{proof}

\begin{prop}\label{sum_sep}
  Let $E/K$ be an algebraic extension. 
Then the set of separable elements of $E/K$ form an intermediate subfield.
\end{prop}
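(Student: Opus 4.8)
The plan is to show that whenever $\alpha,\beta\in E$ are separable over $K$, the whole finite extension $K(\alpha,\beta)/K$ is separable; granting this, every element of $K(\alpha,\beta)$ is separable over $K$, so in particular $\alpha-\beta$, $\alpha\beta$ and $\alpha^{-1}$ (when $\alpha\neq 0$) are separable, and since separability of an element does not depend on the ambient extension (by the remarks following the Proposition) they are separable as elements of $E$. As every element of $K$ is trivially separable (its minimal polynomial is linear), this will exhibit the set of separable elements as a subfield of $E$ containing $K$, that is, an intermediate subfield.

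To prove that $K(\alpha,\beta)/K$ is separable I would compute the number of embeddings and appeal to Corollary \ref{K(alpha)}. First, since $\alpha$ is separable over $K$, Corollary \ref{K(alpha)} gives that $K(\alpha)/K$ is separable and $|\mathrm{Hom}_K(K(\alpha),\Omega)|=[K(\alpha):K]$. Next, $\beta$ is separable over $K$, so by the remarks following the Proposition (separability of an element passes to any larger base field between $K$ and $E$) it is separable over $K(\alpha)$; hence $K(\alpha,\beta)=K(\alpha)(\beta)$ is separable over $K(\alpha)$ and $|\mathrm{Hom}_{K(\alpha)}(K(\alpha,\beta),\Omega)|=[K(\alpha,\beta):K(\alpha)]$, again by Corollary \ref{K(alpha)}. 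Combining the tower formula for the number of embeddings with multiplicativity of degrees then yields
$$|\mathrm{Hom}_K(K(\alpha,\beta),\Omega)|=[K(\alpha,\beta):K(\alpha)]\cdot[K(\alpha):K]=[K(\alpha,\beta):K],$$
and the converse direction of Corollary \ref{K(alpha)} gives that $K(\alpha,\beta)/K$ is separable.

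The main obstacle is that the most natural argument --- $K(\alpha)/K$ and $K(\alpha,\beta)/K(\alpha)$ separable imply $K(\alpha,\beta)/K$ separable --- is exactly transitivity of separability, which is not yet available at this point. I would sidestep it by counting embeddings instead: the crux is using the tower formula together with both directions of Corollary \ref{K(alpha)}, so that separability is converted into the numerical equality $|\mathrm{Hom}_K(-,\Omega)|=[-:K]$, which multiplies cleanly up the tower. I would also take care to note that $K(\alpha,\beta)/K$ has finite degree, since $\alpha,\beta$ are algebraic, so that Corollary \ref{K(alpha)} applies, and that the separability of $\alpha-\beta$, $\alpha\beta$ and $\alpha^{-1}$ is obtained all at once from the separability of $K(\alpha,\beta)/K$ rather than element by element.
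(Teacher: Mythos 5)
Your proposal is correct, but it takes a genuinely different route from the paper's. You reduce everything to the single claim that $K(\alpha,\beta)/K$ is separable and prove that claim by counting embeddings: both directions of Corollary \ref{K(alpha)} convert separability of a finite simple extension into the numerical identity $|\mathrm{Hom}|=[\;\cdot\;:\;\cdot\;]$, which multiplies along the tower $K\subset K(\alpha)\subset K(\alpha,\beta)$ via the tower formula for the number of embeddings. This is in effect the finite-degree case of transitivity of separability, obtained without circularity since Corollary \ref{K(alpha)} (and the remark that separability of an element passes to larger base fields) precede Proposition \ref{sum_sep}. The paper instead argues directly from the definition: given an intermediate field $L$ with $\alpha+\beta\notin L$, it splits into cases according to whether $\alpha\in L$, $\alpha\notin L(\beta)$, or $\alpha\in L(\beta)$, and in each case exhibits a pair of $L$-homomorphisms separating $\alpha+\beta$, invoking Corollary \ref{K(alpha)} only in the last case. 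Your argument is more uniform --- it handles $\alpha-\beta$, $\alpha\beta$ and $\alpha^{-1}$ simultaneously and in fact proves the stronger statement that every element of $K(\alpha,\beta)$ is separable --- while the paper's case analysis keeps the new definition in the foreground, which is presumably the expository point of the article. One detail worth making explicit: the equality $|\mathrm{Hom}_K(K(\alpha),\Omega)|=[K(\alpha):K]$ for separable $\alpha$ comes from the ``in particular'' clause of Corollary \ref{K(alpha)} (i.e.\ from the minimal polynomial having distinct roots), not from the Primitive Element Theorem, so no hidden dependence is introduced.
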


\begin{proof}
 It suffices to prove that for every separable elements $\alpha,
 \beta\in E$, $\alpha+\beta$ and $\alpha\beta$ is separable. 
We may assume that $\alpha\beta\neq0$.
For an intermediate subfield $L$ with $\alpha+\beta\not\in L$,
we should prove that there exists two maps in
$\mathrm{Hom}_L(E,\Omega)$ whose values on $\alpha+\beta$ are different.
If $\alpha\in L$, then by separability of $\beta$ there exists $\phi,
\psi\in \mathrm{Hom}_L(E,\Omega)$ with $\phi(\beta)\neq\psi(\beta)$,
hence $\phi(\alpha+\beta)\neq\psi(\alpha+\beta)$.
We may then assume that $\alpha, \beta\not\in L$.
If $\alpha\not\in L(\beta)$, then by separability of $\alpha$, there
exist
$\phi,\psi\in\mathrm{Hom}_{L(\beta)}(E,\Omega)\subseteq\mathrm{Hom}_L(E,\Omega)$
with $\phi(\alpha)\neq\psi(\alpha)$ and we obtain
$\phi(\alpha+\beta)\neq\psi(\alpha+\beta)$.
So assume that $\alpha\in L(\beta)$.
As $\beta$ is separable over $K$, it is also separable over $L$, hence
by Corollary \ref{K(alpha)}
every element of $L(\beta)$ is separable over $L$, in particular
$\alpha+\beta$ is separable over $L$.
Hence $|\mathrm{Hom}_L(L(\alpha+\beta),\Omega)|>1$ and there exist
$\phi, \psi\in\mathrm{Hom}_L(L(\alpha+\beta),\Omega)$ with
$\phi(\alpha+\beta)\neq\psi(\alpha+\beta)$, $\phi$ and $\psi$ can be
extended to $\bar{\phi}, \bar{\psi}\in\mathrm{Hom}_L(E,\Omega)$ and we
are done.
The proof for $\alpha\beta$ is similar.
\end{proof}

\begin{cor}
  Let $E/L$ and $L/K$ be algebraic separable extensions.
Then $E/K$ is separable as well.
\end{cor}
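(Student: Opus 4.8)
The plan is to verify that every $\alpha\in E$ is separable over $K$, and to reduce this to a statement about a finite subtower to which Corollary \ref{K(alpha)} and the tower formula for the number of embeddings apply. Fix $\alpha\in E$. Since $E/L$ is separable, $\alpha$ is separable over $L$, so its minimal polynomial $f(X)\in L[X]$ has distinct roots in $\Omega$. The idea is that the separability of $\alpha$ is witnessed by only finitely much data from $L$, namely the coefficients of $f$, so that the whole situation lives inside a finite separable tower over $K$.

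The key step is this finite descent. Let $a_0,\dots,a_{n-1}\in L$ be the coefficients of $f$ and set $L_0=K(a_0,\dots,a_{n-1})$. Because $L/K$ is algebraic and $L_0$ is generated over $K$ by finitely many algebraic elements, $L_0/K$ is finite. Moreover $L_0/K$ is separable: every element of $L_0$ lies in $L$, hence is separable over $K$ by hypothesis, and by the base-change remark following the first Proposition separability of an element over $K$ is intrinsic, independent of the ambient extension. I also claim that $\alpha$ is separable over $L_0$: since $f\in L_0[X]$ is monic with $f(\alpha)=0$, while $f$ is the minimal polynomial of $\alpha$ over the larger field $L$, it is also the minimal polynomial of $\alpha$ over $L_0$; as it has distinct roots, $\alpha$ is separable over $L_0$.

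It then remains to assemble the counts. By Corollary \ref{K(alpha)}, the finite separable extension $L_0/K$ gives $|\mathrm{Hom}_K(L_0,\Omega)|=[L_0:K]$, and the separability of $\alpha$ over $L_0$ gives that $L_0(\alpha)/L_0$ is separable with $|\mathrm{Hom}_{L_0}(L_0(\alpha),\Omega)|=[L_0(\alpha):L_0]$. The tower formula for the number of embeddings then yields
$$|\mathrm{Hom}_K(L_0(\alpha),\Omega)|=[L_0(\alpha):L_0]\cdot[L_0:K]=[L_0(\alpha):K].$$
Applying the converse direction of Corollary \ref{K(alpha)} shows $L_0(\alpha)/K$ is separable, so in particular $\alpha$ is separable over $K$. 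As $\alpha\in E$ was arbitrary, $E/K$ is separable.

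The main obstacle is the finite reduction in the second paragraph rather than the final counting: one must ensure that the separability of $\alpha$ over $L$ genuinely descends to a finite separable subextension $L_0/K$, which requires checking both that $L_0/K$ is finite and separable and that $f$ remains the (separable) minimal polynomial of $\alpha$ over $L_0$. Once these intrinsic facts are secured, the tower formula and Corollary \ref{K(alpha)} combine mechanically to give the result.
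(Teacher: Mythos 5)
Your proof is correct, but it takes a genuinely different route from the paper's. The paper argues directly from its homomorphism-based definition: given an intermediate field $M$ of $E/K$ and $\alpha\in E\setminus M$, it splits into the cases $\alpha\notin LM$ (where separability of $E/L$, hence of $E/LM$, immediately supplies two $LM$-homomorphisms separating $\alpha$) and $\alpha\in LM$ (where Proposition \ref{sum_sep} shows $LM/M$ is separable, and the resulting homomorphisms are extended to $E$). You instead perform the classical finite descent: you trap $\alpha$ in a finite tower $K\subset L_0\subset L_0(\alpha)$ with $L_0$ generated by the coefficients of the minimal polynomial of $\alpha$ over $L$, check that $L_0/K$ is finite separable and that $f$ remains the minimal polynomial over $L_0$, and then multiply embedding counts via the tower formula and both directions of Corollary \ref{K(alpha)}. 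Your key steps all hold up: $L_0\subseteq L$ gives separability of $L_0/K$ via the intrinsicness remark, and the divisibility argument identifying $f$ as the minimal polynomial over $L_0$ is standard. What your approach buys is independence from Proposition \ref{sum_sep} and a self-contained numerical argument; what the paper's approach buys is a proof that never leaves the language of separating homomorphisms (which is the point of the article) and that handles arbitrary intermediate fields $M$ of the possibly infinite extension $E/K$ in one stroke rather than element by element. Note that both proofs ultimately lean on the same machinery, since Proposition \ref{sum_sep} itself invokes Corollary \ref{K(alpha)}.
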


\begin{proof}
  Let $M$ be an intermediate subfield of $E/K$ and $\alpha\in
  E\backslash M$.
We should prove the existence of two homomorphisms $\phi,
\psi\in\mathrm{Hom}_M(E,\Omega)$ with $\phi(\alpha)\neq\psi(\alpha)$.
If $\alpha\not\in LM$, then by the separability of $E/L$, there exist
$\phi,
\psi\in\mathrm{Hom}_{LM}(E,\Omega)\subset\mathrm{Hom}_M(E,\Omega)$
with $\phi(\alpha)\neq\psi(\alpha)$ and we are done.
Consider the case where, $\alpha\in LM$. 
As all elements of $L$ are separable over $K$, then by
Proposition \ref{sum_sep}, $LM/M$ is separable.
There exist so two homomorphisms $\phi,
\psi\in\mathrm{Hom}_M(LM,\Omega)$ with $\phi(\alpha)\neq\psi(\alpha)$.
These homomorphisms can be extended to homomorphisms in $\mathrm{Hom}_M(E,\Omega)$.
\end{proof}

\bibliographystyle{myamsplain}
\vspace{.25cm}

\tiny
\noindent{M. G. Mahmoudi,
  {\tt
    mmahmoudi@sharif.ir}, \\
  Department of Mathematical Sciences, Sharif University of
  Technology, P. O. Box 11155-9415, Tehran, Iran.}

\end{document}